\documentclass[12pt]{article}

\usepackage{amsfonts,mathrsfs}
\usepackage{amssymb,amsmath,amsbsy,amsthm}
\usepackage{dsfont}
\usepackage{blkarray}
\usepackage{tikz}
\usepackage{tkz-euclide}
\usepackage{tikz-cd}
\usetikzlibrary{patterns}
\usepackage{ae}
\usepackage{bm}
\usepackage{graphicx}
\usepackage{float}
\usepackage{cite}
 \usepackage{indentfirst}
\usepackage{lineno}
\usepackage{titlesec}
\usepackage{enumitem}
\usepackage{color}
\usepackage{aliascnt}
\usepackage{varioref}
\usepackage{hyperref}
\hypersetup{colorlinks=true,linkcolor=cyan,anchorcolor=cyan,citecolor=red,CJKbookmarks=True,
,urlcolor=cyan}
\pdfstringdefDisableCommands{%
    %
    }
\usepackage{cleveref}

\numberwithin{equation}{section}

\def\int{\mbox{\rm int}}

\def\And{\mbox{\rm ~and~}}

\def\For{\mbox{\rm ~for~}}

\def\({\mbox{\rm (}}\def\){\mbox{\rm )}}

\makeatletter

\newcommand{\Rmnum}[1]{\expandafter\@slowromancap\romannumeral #1@}
\makeatother
\newtheorem{theorem}{Theorem}[section]
\newaliascnt{lemma}{theorem}
\newtheorem{lemma}[lemma]{Lemma}
\aliascntresetthe{lemma}

\newaliascnt{proposition}{theorem}

\aliascntresetthe{proposition}

\newaliascnt{fact}{theorem}

\aliascntresetthe{fact}

\newaliascnt{definition}{theorem}
\newtheorem{definition}[definition]{Definition}
\aliascntresetthe{definition}

\newaliascnt{conjecture}{theorem}

\aliascntresetthe{conjecture}

\newaliascnt{corollary}{theorem}
\newtheorem{corollary}[corollary]{Corollary}
\aliascntresetthe{corollary}

\newaliascnt{claim}{theorem}

\aliascntresetthe{claim}

\newaliascnt{problem}{theorem}

\aliascntresetthe{problem}

\newaliascnt{question}{theorem}

\aliascntresetthe{question}

\newaliascnt{remark}{theorem}

\aliascntresetthe{remark}

\newaliascnt{example}{theorem}

\aliascntresetthe{example}

\newaliascnt{notation}{theorem}

\aliascntresetthe{notation}

\linespread{1.0}

\setlength{\parindent}{2em}
\setlength{\textheight}{240mm}

\addtolength{\hoffset}{-15mm}
\addtolength{\voffset}{-20mm}
\addtolength{\textwidth}{30mm}
\begin{document}
\begin{center}
{\Large\bf
Counting Flows of $b$-compatible Graphs}\\[7pt]
\end{center}
\vskip3mm

\begin{center}
Houshan Fu$^{1}$, Xiangyu Ren$^{2,*}$  and Suijie Wang$^{3}$\\[8pt]

 $^{1}$School of Mathematics and Information Science\\
 Guangzhou University\\
Guangzhou 510006, Guangdong, P. R. China\\[12pt]

$^{2}$School of  Mathematics and Statistics\\
 Shanxi University\\
 Taiyuan 030006, Shanxi, P. R. China\\[12pt]

 $^{3}$School of Mathematics\\
 Hunan University\\
 Changsha 410082, Hunan, P. R. China\\[15pt]

$^{*}$Correspondence to be sent to: renxy@sxu.edu.cn \\
Emails: $^{1}$fuhoushan@gzhu.edu.cn, $^{3}$wangsuijie@hnu.edu.cn\\[15pt]
\end{center}
\vskip 3mm
\begin{abstract}
Kochol introduced the assigning polynomial $F(G,\alpha;k)$ to count nowhere-zero $(A,b)$-flows of a graph $G$, where $A$ is a finite Abelian group and $\alpha$ is a $\{0,1\}$-assigning from a family $\Lambda(G)$ of certain nonempty vertex subsets of $G$ to $\{0,1\}$. We introduce the concepts of $b$-compatible graph and $b$-compatible broken bond to give an explicit formula for the assigning polynomials and to examine their coefficients. More specifically, for a function $b:V(G)\to A$, let $\alpha_{G,b}$ be a $\{0,1\}$-assigning of $G$ such  that for each $X\in\Lambda(G)$, $\alpha_{G,b}(X)=0$ if and only if $\sum_{v\in X}b(v)=0$. We show that for any $\{0,1\}$-assigning $\alpha$ of $G$, if there exists a function $b:V(G)\to A$ such that $G$ is $b$-compatible and  $\alpha=\alpha_{G,b}$, then the assigning polynomial $F(G,\alpha;k)$ has the $b$-compatible spanning subgraph expansion
\[
F(G,\alpha;k)=\sum_{\substack{S\subseteq E(G),\\G-S\mbox{ is $b$-compatible}}}(-1)^{|S|}k^{m(G-S)},
\]
and is the following form $F(G,\alpha;k)=\sum_{i=0}^{m(G)}(-1)^ia_i(G,\alpha)k^{m(G)-i}$, where each $a_i(G,\alpha)$ is the number of subsets $S$ of $E(G)$ having $i$ edges such that $G-S$ is $b$-compatible and $S$ contains no $b$-compatible broken bonds with respect to a total order on $E(G)$. Applying the counting interpretation, we also obtain unified comparison relations for the signless coefficients of assigning polynomials. Namely, for any $\{0,1\}$-assignings $\alpha,\alpha'$ of $G$, if there exist functions $b:V(G)\to A$ and $b':V(G)\to A'$ such that $G$ is both $b$-compatible and $b'$-compatible, $\alpha=\alpha_{G,b}$, $\alpha'=\alpha_{G,b'}$ and $\alpha(X)\le\alpha'(X)$ for all $X\in\Lambda(G)$, then
\[
a_i(G,\alpha)\le a_i(G,\alpha') \quad \mbox{ for }\quad i=0,1,\ldots, m(G).
\]
\vspace{1ex}\\
\noindent{\bf Keywords:}  Assigning polynomial, nowhere-zero $(A,b)$-flow, $b$-compatible graph, broken bond\vspace{1ex}\\
{\bf Mathematics Subject Classifications:} 05C31, 05C21
\end{abstract}
\section{Introduction}\label{Sec-1}
\paragraph{1.1. Background.}
Nowhere-zero $\mathbb{Z}_k$-flows, or modular $k$-flows in graphs, were initially introduced by Tutte in \cite{Tutte1949,Tutte1954} as a dual concept to graph coloring. A planar graph is $k$-colorable if and only if its dual graph allows a nowhere-zero $\mathbb{Z}_k$-flow. An analog to $\mathbb{Z}_k$-flow  is the integer $k$-flow or, simply $k$-flow, where values on edges are integers strictly less than $k$ in absolute value. It is well-known that a graph has a nowhere-zero $k$-flow if and only if it admits a nowhere-zero $A$-flow for any Abelian group $A$ of order $k$ in \cite{Tutte1949}. Nowhere-zero flows are nicely surveyed in \cite{Jaeger1988,Seymour1995}. Tutte also proposed several famous conjectures on nowhere-zero flows, known as the 5-flow conjecture \cite{Tutte1954}, 4-flow conjecture \cite{Tutte1966} and 3-flow conjecture (see Unsolved Problem 48, in \cite{Bondy1976}). These conjectures reveal that nowhere-zero flows are closely related to the edge connectivity of graphs. To address the 3-flow conjecture,  Jaeger et al. \cite{Jaeger1992} proposed the concept of  group connectivity of graphs by introducing a nonhomogeneous form of the nowhere-zero flow in 1992. They said that a graph $G$ is {\em $A$-connected} if there exists an orientation of $G$ such that for any $A$-valued zero-sum function $b\in A^{V(G)}$, $G$ always admits an $A$-valued nowhere-zero function $f\in A^{E(G)}$ whose boundary equals $b$. Such function $f$ is called a {\em nowhere-zero $(A,b)$-flow} in \cite{Lai2006}. Since then, the related topic has attracted a lot of research attentions, such as  \cite{Han-Li-Li-Wang2021,Husek2020,Kral2005,Lai2000,Lai-Mazza2021,Langhede-Thomassen2020,Langhede-Thomassen2024,Li2017}.

Similar to the chromatic number, a fundamental problem is to determine the {\em flow number} of a given graph, that is, the smallest integer $k$ such that the graph admits a nowhere-zero $k$-flow. Counting the number of nowhere-zero flows or proper colorings could be viewed as another aspect of this topic, originating from Birkhoff's attempt to solve the Four-Color problem by means of counting proper colorings of a plane graph  \cite{Birkhoff1912}. These counting problems give rise to two classical polynomials known as the chromatic polynomial and the flow polynomial. The flow polynomial evaluates the number of nowhere-zero $A$-flows, which does not depend on the structure of $A$, but only on the order of $A$. Kochol in \cite{Kochol2002} demonstrated that the number of nowhere-zero $k$-flows is also a polynomial in $k$, distinct from that for nowhere-zero $\mathbb{Z}_k$-flows. A wide field of study of the flow polynomial mainly focuses on its roots\cite{Dong2019}, coefficients\cite{Dong-Koh2007} and computing \cite{Kochol2005,Sekine1997}. However, problems on counting nowhere-zero $(A,b)$-flows had been overlooked until Kochol \cite{Kochol2022} recently showed that there exists a polynomial function counting the  nowhere-zero $(A,b)$-flows of a graph $G$ by an inductive method, called an assigning polynomial. Motivated by Kochol's work, we investigate further properties on the assigning polynomial related to $(A,b)$-flows of a graph. A particular focus will be given to the coefficients of the assigning polynomial, including their combinatorial interpretations  and how they  change as $b$ runs over all locally $A$-valued zero-sum functions.

\paragraph{1.2. Main results.}
Let $G$ be a graph with the vertex set $V(G)$ and the edge set $E(G)$, which may contain multiple edges and loops. Denote by $c(G)$ the number of the components of $G$ and $m(G)=|E(G)|-|V(G)|+c(G)$. Let $A$ be a finite additive Abelian group with the identity $0$. Define $A^{E(G)}$ and $A^{V(G)}$ as the sets of functions from  $E(G)$ to $A$ and $V(G)$ to $A$, respectively. Given an edge orientation of $G$, for each vertex $v\in V(G)$ we denote by $E^+(v)$ the set of edges for which $v$ is the head, and by $E^-(v)$ the set of edges for which $v$ is the tail. The {\em boundary} of a function $f \in A^{E(G)}$ is a function $\partial f\in A^{V(G)}$ defined as follows,
\[
\partial f(v)=\sum_{e \in E^{+}(v)} f(e)-\sum_{e \in E^{-}(v)} f(e),
\]
where the summation refers to the addition in  $A$. For any $b\in A^{V(G)}$,  an {\em $(A,b)$-flow} of $G$ is a function $f \in A^{E(G)}$ whose boundary is $b$. Furthermore, an $(A,b)$-flow $f$ of $G$ is said to be  {\em nowhere-zero} if $f(e)\ne 0$ for any $e\in E(G)$. In particular, when $b=0$, the $(A,0)$-flow of $G$ is precisely the classical group-valued flow in graphs,  abbreviated as an {\em $A$-flow}. We use the notations $F(G,b;A)$ and $F^*(G,b;A)$ to represent the numbers of $(A,b)$-flows and nowhere-zero $(A,b)$-flows of $G$, respectively.

Note from \cite[Proposition 2.1]{Jaeger1992} that for any function $f\in A^{E(G)}$, the boundary $\partial f$ is a {\em locally $A$-valued zero-sum function} on $G$, i.e., the restriction of $\partial f$ to every component of $G$ sums to zero. This implies that the counting function $F^*(G,b;A)=0$ whenever $b$ is not a locally $A$-valued zero-sum function. For this reason, we only need to consider nowhere-zero $(A,b)$-flows of $G$ for locally zero-sum functions $b\in A^{V(G)}$. In this case, the graph $G$ is said to be {\em $b$-compatible}, which is a convenient concept for later use. More formally, if $G$ has a vertex set $V$ and the map $b: V\to A$ is a locally $A$-valued zero-sum function on $G$, then we call $G$ a {\em $b$-compatible graph}.

Denote by $Z(G,A)$  the collection of all locally $A$-valued zero-sum functions $b\in A^{V(G)}$ on $G$.  Our first main result establishes an explicit counting formula of the number of nowhere-zero $(A,b)$-flows for $b\in Z(G,A)$.
\begin{theorem}[Counting Formula]\label{Flow-Polnomial}
For any locally zero-sum function $b\in Z(G,A)$,  we have
\[
F^*(G,b;A)=\sum_{\substack{S\subseteq E(G),\\G-S\mbox{ is $b$-compatible}}}(-1)^{|S|}|A|^{m(G-S)}.
\]
\end{theorem}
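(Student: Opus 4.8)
The plan is to derive the formula by an inclusion--exclusion over the edges on which an $(A,b)$-flow is permitted to vanish, after first settling the number of arbitrary (not necessarily nowhere-zero) $(A,b)$-flows of a graph.

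\emph{First}, I would establish the auxiliary count
\[
F(H,b;A)=\begin{cases}|A|^{\,m(H)}, & b\in Z(H,A),\\[3pt] 0, & b\notin Z(H,A),\end{cases}
\]
for every graph $H$ and every $b\in A^{V(H)}$. Fixing an orientation, the boundary map $\partial\colon A^{E(H)}\to A^{V(H)}$ is a homomorphism of Abelian groups whose fibre over $b$ is, by definition, the set of $(A,b)$-flows of $H$. By \cite[Proposition 2.1]{Jaeger1992}, $\Im\partial\subseteq Z(H,A)$, and conversely every locally zero-sum $b$ is a boundary (one realizes it by routing flow along a path joining two vertices at which $b$ is nonzero, inducting on the size of the support of $b$); hence $\Im\partial=Z(H,A)$, which has order $|A|^{\,|V(H)|-c(H)}$. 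Therefore each nonempty fibre is a coset of $\ker\partial$ and has size $|A^{E(H)}|/|Z(H,A)|=|A|^{\,m(H)}$, while empty fibres contribute $0$.

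\emph{Second}, for $S\subseteq E(G)$ I would identify the $(A,b)$-flows of $G$ that vanish on all of $S$ with the $(A,b)$-flows of the edge-deleted graph $G-S$: restricting such an $f$ to $E(G)\setminus S=E(G-S)$ is a bijection onto the $(A,b)$-flows of $G-S$, because edges carrying the value $0$ make no contribution to any boundary sum and because $V(G-S)=V(G)$ leaves $b$ unchanged. Consequently the number of $(A,b)$-flows of $G$ vanishing on $S$ equals $F(G-S,b;A)$. Writing $B_e$ for the set of $(A,b)$-flows of $G$ with $f(e)=0$, the nowhere-zero $(A,b)$-flows are exactly those lying in no $B_e$, so the sieve formula gives
\[
F^*(G,b;A)=\sum_{S\subseteq E(G)}(-1)^{|S|}\Bigl|\bigcap_{e\in S}B_e\Bigr|=\sum_{S\subseteq E(G)}(-1)^{|S|}F(G-S,b;A),
\]
and substituting the first step's evaluation of $F(G-S,b;A)$ turns the right-hand side into $\sum_{S}(-1)^{|S|}|A|^{\,m(G-S)}$ restricted to those $S$ for which $G-S$ is $b$-compatible, which is the asserted identity.

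The only non-formal ingredient is the auxiliary count, and within it the surjectivity $\Im\partial=Z(H,A)$ together with the order $|\ker\partial|=|A|^{\,m(H)}$ for an arbitrary finite Abelian group $A$ (seen by choosing a spanning forest: the values on the non-forest edges may be prescribed arbitrarily and then determine the forest edges uniquely). I would also emphasize that the restriction ``$G-S$ is $b$-compatible'' in the sum is essential rather than cosmetic: although $G$ is $b$-compatible by hypothesis, deleting the edges of $S$ may split a component into parts whose individual $b$-sums are nonzero, and for such $S$ the corresponding term genuinely vanishes, consistently with $F(G-S,b;A)=0$.
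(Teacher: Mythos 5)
Your proposal is correct and follows essentially the same route as the paper: inclusion--exclusion (the paper phrases it as M\"obius inversion) over the edge subsets on which the flow vanishes, combined with the evaluation $F(G-S,b;A)=|A|^{m(G-S)}$ when $G-S$ is $b$-compatible and $0$ otherwise. The only difference is cosmetic: you justify $F(H,b;A)=|A|^{m(H)}$ via the coset structure of the boundary homomorphism and explicitly prove that every locally zero-sum $b$ is a boundary, whereas the paper asserts $F(G,b;A)\neq 0$ and uses the equivalent bijection with $A$-flows obtained by translating by a fixed $(A,b)$-flow.
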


For any vertex subset $X\subseteq V(G)$, denote by $G[X]$ and $G-X$ the vertex-induced subgraphs of $G$ on the vertex sets $X$ and $V(G)\setminus X$, respectively. Let $\Lambda(G)$ be the family of nonempty vertex subsets $X\subseteq V(G)$ such that $G[X]$ is connected and $c(G-X)=c(G)$. Adopting the Kochol's notation, a {\em $\{0,1\}$-assigning} of $G$ is a map $\alpha$ from $\Lambda(G)$ to the set $\{0,1\}$. Each function  $b\in A^{V(G)}$ automatically induces a $\{0,1\}$-assigning  $\alpha_{G,b}$ on $\Lambda(G)$ satisfying that for each $X\in\Lambda(G)$,
\[
\alpha_{G,b}(X)=\begin{cases}
0,& \mbox{ if } \sum_{v\in X}b(v)=0;\\
1,& \mbox{ otherwise }.
\end{cases}
\]

Kochol's main result \cite[Theorem 1]{Kochol2022} showed that there exists a polynomial function counting nowhere-zero $(A,b)$-flows.
\begin{theorem}[\cite{Kochol2022}, Theorem 1]\label{Polynomial-Theorem}
Let $b\in Z(G,A)$. There exists a polynomial function $F(G,\alpha;k)$ $(\alpha=\alpha_{G,b})$ of $k$ such that $F(G,\alpha;k)=F^*(G,b;A')$ for any Abelian group $A'$ of  order $k$ and each $b'\in Z(G,A')$ satisfying $\alpha_{G,b'}=\alpha$. Moreover, $F(G,\alpha; k)$ has degree $m(G)$ if $F^*(G,b;A)\ne 0$.
\end{theorem}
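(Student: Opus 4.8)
The plan is to derive \autoref{Polynomial-Theorem} from the Counting Formula, \autoref{Flow-Polnomial}. By that result,
\[
F^*(G,b;A)=\sum_{\substack{S\subseteq E(G),\\ G-S\text{ is }b\text{-compatible}}}(-1)^{|S|}|A|^{m(G-S)}
\]
for every $b\in Z(G,A)$, and the exponents $m(G-S)$ depend only on $G$ and $S$. Hence everything reduces to one claim: the index set $\{S\subseteq E(G):G-S\text{ is }b\text{-compatible}\}$ is determined by the assigning $\alpha_{G,b}$, independently of $A$ and of the particular $b$ realizing that assigning. Granting the claim, define $F(G,\alpha;k)$ by substituting $k$ for $|A|$ in the sum above, indexed over those $S$ for which $G-S$ is $b$-compatible for some --- equivalently, any --- $b$ with $\alpha_{G,b}=\alpha$; this is a polynomial in $k$, and \autoref{Flow-Polnomial} applied to an arbitrary Abelian group $A'$ of order $k$ and any $b'\in Z(G,A')$ with $\alpha_{G,b'}=\alpha$ gives $F(G,\alpha;k)=F^*(G,b';A')$. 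This is the first assertion.

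The heart of the matter --- and, I expect, the main obstacle --- is the claim just made. Let $b\in Z(G,A)$ and $b'\in Z(G,A')$ satisfy $\alpha_{G,b}=\alpha_{G,b'}$, and fix $S\subseteq E(G)$; we must show $G-S$ is $b$-compatible if and only if it is $b'$-compatible. The vertex classes of the components of $G-S$ refine those of $G$, so it suffices to treat a single component $K$ of $G$, partitioned by $G-S$ into parts $C_0,C_1,\dots,C_r$; being $b$-compatible on $K$ means $\sum_{v\in C_j}b(v)=0$ for all $j$. If $r=0$ this is automatic since $b\in Z(G,A)$. If $r\ge1$, contract each $C_j$ to get a connected multigraph $H$ on $\{C_0,\dots,C_r\}$ whose edges are the edges of $S$ inside $K$; pick a spanning tree $T$ of $H$, root it at $C_0$, and for each edge $t$ of $T$ let $Y_t$ be the union of the $C_j$ that lie in the $C_0$-free component of $T-t$ (equivalently, the union of the $C_j$ over a rooted subtree of $T$). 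Two points must be checked. First, $Y_t\in\Lambda(G)$: the $C_j$'s making up $Y_t$ induce a subtree of $T$ and each $C_j$ is connected, so $G[Y_t]$ is connected; the same argument applies to $G[K\setminus Y_t]$, which is connected and nonempty (it is the $C_0$-containing complementary subtree), whence $c(G-Y_t)=c(G)$. Second, because distinct rooted subtrees of $T$ are nested or disjoint, a telescoping along $T$ shows that, granted $\sum_{v\in K}b(v)=0$, the conditions $\sum_{v\in C_j}b(v)=0$ for all $j$ are equivalent to the conditions $\sum_{v\in Y_t}b(v)=0$ --- that is, $\alpha_{G,b}(Y_t)=0$ --- for all edges $t$ of $T$. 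As the sets $Y_t$ depend only on $G$, $S$, and the chosen tree, this condition is manifestly a function of $\alpha_{G,b}$ alone, proving the claim; more precisely, $G-S$ is $b$-compatible if and only if $\alpha_{G,b}$ vanishes on a fixed finite subfamily of $\Lambda(G)$.

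Finally, the degree statement. Since $m(G-S)=|E(G)|-|S|-|V(G)|+c(G-S)$ and $c(G)\le c(G-S)\le c(G)+|S|$, with the upper bound attained exactly when every edge of $S$ is a bridge of $G$, we have $m(G-S)\le m(G)$, with equality precisely when $S$ consists solely of bridges of $G$. Hence the coefficient of $k^{m(G)}$ in $F(G,\alpha;k)$ is $\sum(-1)^{|S|}$ over all such $S$ with $G-S$ additionally $b$-compatible. Assume now $F^*(G,b;A)\ne0$ and fix a nowhere-zero $(A,b)$-flow $f$. For a bridge $e$ of $G$ with the two sides $C',C''$ (so $C'\sqcup C''$ is the component of $G$ through $e$) one has $C'\in\Lambda(G)$, and summing $\partial f(v)=b(v)$ over all $v\in C'$ --- where every term cancels except the lone edge $e$ crossing the cut --- yields $f(e)=\pm\sum_{v\in C'}b(v)$; since $f(e)\ne0$ this forces $\alpha_{G,b}(C')=1$. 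Consequently no nonempty set $S$ of bridges of $G$ can leave $G-S$ $b$-compatible: for $e\in S$ with a side $C'$, that $C'$ would be a union of components of $G-S$ and hence satisfy $\sum_{v\in C'}b(v)=0$, contradicting $\alpha_{G,b}(C')=1$. So $S=\emptyset$ is the only index contributing to the top coefficient, which therefore equals $1$; thus $F(G,\alpha;k)$ has degree exactly $m(G)$, completing the proof.
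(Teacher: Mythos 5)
Your proof is correct, and it necessarily differs from the paper's treatment: the paper does not prove this statement at all but imports it as Theorem~1 of Kochol \cite{Kochol2022}, whose argument is inductive. You instead derive it directly from \autoref{Flow-Polnomial}, and the heart of your argument --- that whether $G-S$ is $b$-compatible is detectable from $\alpha_{G,b}$ alone, via the sets $Y_t\in\Lambda(G)$ obtained from a spanning tree of the graph of contracted components --- is exactly the invariance statement the paper never makes explicit. I checked the three ingredients: the membership $Y_t\in\Lambda(G)$ (both sides of $T-t$ induce connected subgraphs, so $c(G-Y_t)=c(G)$), the telescoping equivalence between the conditions $\sum_{C_j}b=0$ and $\sum_{Y_t}b=0$ given $\sum_K b=0$, and the degree argument ($m(G-S)=m(G)$ iff $S$ consists of bridges, and a nowhere-zero flow forces $\alpha(C')=1$ on each side of each bridge, so only $S=\emptyset$ contributes to the leading coefficient, which is $1$). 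All are sound, and there is no circularity since \autoref{Flow-Polnomial} is proved in the paper by M\"obius inversion without reference to this theorem. Your route buys more than the citation does: the identity $F(G,\alpha;k)=\sum_{S}(-1)^{|S|}k^{m(G-S)}$ over $b$-compatible $G-S$ falls out immediately, so your key claim also renders the paper's proof of \autoref{Assigningpolynomial-Express} (which invokes Kochol's theorem together with the product-group interpolation trick over $A^1,\dots,A^{m(G)+1}$) unnecessary, at the cost of the one page of combinatorial work on contracted components that the interpolation argument avoids.
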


The polynomial function $F(G,\alpha;k)$ in \autoref{Polynomial-Theorem} is referred to as an {\em $\alpha$-assigning polynomial} of $G$, or simply {\em assigning polynomial}. Motivated by \autoref{Flow-Polnomial}, below we provide an explicit expression for the $\alpha$-assigning polynomial in terms of  $b$-compatible spanning subgraph expansion.
\begin{theorem}\label{Assigningpolynomial-Express}
For any $\{0,1\}$-assigning $\alpha$ of $G$, if there exist an Abelian group $A$ and a locally zero-sum function $b\in Z(G,A)$ such that $\alpha=\alpha_{G,b}$, then
\[
F(G,\alpha,k)=\sum_{\substack{S\subseteq E(G),\\G-S\mbox{ is $b$-compatible}}}(-1)^{|S|}k^{m(G-S)}.
\]
\end{theorem}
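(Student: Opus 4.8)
The plan is to deduce the identity from the counting formula of \autoref{Flow-Polnomial} together with Kochol's polynomiality statement \autoref{Polynomial-Theorem}, by upgrading an equality of integers to an equality of polynomials through evaluation at infinitely many arguments. Write
\[
P_{G,b}(k):=\sum_{\substack{S\subseteq E(G),\\ G-S\text{ is $b$-compatible}}}(-1)^{|S|}k^{m(G-S)},
\]
a polynomial in $k$ whose coefficients depend only on the exponents $m(G-S)$ and on which spanning subgraphs $G-S$ are $b$-compatible; the goal is to show $F(G,\alpha;k)=P_{G,b}(k)$ as polynomials. One may first discard the trivial case $|A|=1$: there $b$ is the zero function, $\alpha\equiv 0$, every $G-S$ is compatible with the zero function over any group, and replacing $(A,b)$ by $(\mathbb{Z}_2,0)$ alters neither side of the desired identity; so we may assume $|A|\ge 2$.

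The key step is an inflation trick. For each $n\ge 1$ set $A_n:=A^{\,n}$ and let $b_n\in A_n^{V(G)}$ be the diagonal lift $b_n(v):=(b(v),\dots,b(v))$. For any $X\subseteq V(G)$ the element $\sum_{v\in X}b_n(v)=\bigl(\sum_{v\in X}b(v),\dots,\sum_{v\in X}b(v)\bigr)$ vanishes in $A_n$ exactly when $\sum_{v\in X}b(v)$ vanishes in $A$; applying this to the vertex set of $G$ and to its components shows $b_n\in Z(G,A_n)$, applying it to members of $\Lambda(G)$ shows $\alpha_{G,b_n}=\alpha_{G,b}=\alpha$, and applying it to each component of $G-S$ shows that $G-S$ is $b_n$-compatible if and only if it is $b$-compatible. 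Consequently $P_{G,b_n}(k)=P_{G,b}(k)$ identically, for every $n$.

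Finally, chain the two quoted results. Because $|A_n|=|A|^n$ and $\alpha_{G,b_n}=\alpha$, \autoref{Polynomial-Theorem} gives $F(G,\alpha;|A|^n)=F^*(G,b_n;A_n)$, while \autoref{Flow-Polnomial} applied to $b_n\in Z(G,A_n)$ gives $F^*(G,b_n;A_n)=P_{G,b_n}(|A|^n)=P_{G,b}(|A|^n)$. Hence the polynomials $F(G,\alpha;k)$ and $P_{G,b}(k)$ agree at every $k=|A|^n$, $n\ge 1$, which are infinitely many distinct positive integers since $|A|\ge 2$; therefore they coincide, which is the assertion. The only point needing care — and the one I would verify in full detail — is the bookkeeping that the passage from $(A,b)$ to $(A_n,b_n)$ leaves both the assigning $\alpha$ and the family of $b$-compatible spanning subgraphs (hence the polynomial $P_{G,b}$) literally unchanged; granting this, the argument is a formal consequence of \autoref{Flow-Polnomial}, \autoref{Polynomial-Theorem}, and the rigidity of polynomials under agreement at infinitely many points. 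There is no deeper obstacle here — the substantive work is already contained in \autoref{Flow-Polnomial}.
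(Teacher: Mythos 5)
Your proof is correct and takes essentially the same route as the paper's: both inflate $(A,b)$ to powers $(A^n,b_n)$ in a way that leaves $\alpha$ and the family of $b$-compatible spanning subgraphs unchanged, then chain \autoref{Flow-Polnomial} with \autoref{Polynomial-Theorem} and conclude by polynomial rigidity (the paper evaluates at exactly $m(G)+1$ points and invokes the degree bound, you evaluate at infinitely many points; the paper lifts $b$ into the first coordinate, you use the diagonal lift --- both work identically). Your explicit disposal of the degenerate case $|A|=1$, where the orders $|A|^n$ would fail to be distinct, is a small point the paper glosses over.
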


The natural question is whether there exists a combinatorial interpretation for the coefficients of assigning polynomials. To address this, we introduce the concept of the $b$-compatible broken bond. A bond of a graph is defined as a minimal nonempty edge cut, which is the smallest nonempty subset of edges whose removal from the graph results in an increase in the number of components.
\begin{definition}
{\rm Let $G$ be a graph with a total order  $\prec$ on $E(G)$. For any $b\in Z(G,A)$, a subset $S\subseteq E(G)$ is said to be:
\begin{itemize}
\item a {\em $b$-compatible bond} of $G$ if $S$ is a bond of $G$ and $G-S$ is $b$-compatible;
\item a {\em $b$-compatible broken bond} of $G$ if $S$ is obtained from a $b$-compatible bond by removing the maximal element with respect to the total order $\prec$.
\end{itemize}
}
\end{definition}
The following result shows that if $\alpha=\alpha_{G,b}$ for some $b\in Z(G,A)$, then the coefficients of the $\alpha$-assigning polynomial $F(G,\alpha;k)$ can be further interpreted in terms of $b$-compatible spanning subgraphs that do not contain $b$-compatible broken bonds. In addition, \autoref{NBB-Theorem} also provides a new perspective on addressing certain group connectivity problems in graphs. A detailed discussion will appear at the end of \autoref{Sec-3}.

\begin{theorem}[Generalized Broken Bond Theorem]\label{NBB-Theorem}
Let $G$ be a graph with a total order $\prec$ on $E(G)$, and $\alpha$ be a $\{0,1\}$-assigning of $G$. Write $F(G,\alpha;k)=\sum_{i=0}^{m(G)}(-1)^ia_i(G,\alpha)k^{m(G)-i}$.  If there exist an Abelian group $A$ and a locally zero-sum function $b\in Z(G,A)$ such that $\alpha=\alpha_{G,b}$, then each $a_i(G,\alpha)$ is the number of subsets $S$ of $E(G)$ having $i$ edges such that $G-S$ is $b$-compatible and $S$ contains no $b$-compatible broken bonds with respect to $\prec$.
\end{theorem}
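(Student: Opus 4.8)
The statement is the analogue, for the restricted $b$-compatible expansion, of the classical broken-bond (Whitney) theorem for the flow polynomial, which is itself the broken-circuit theorem for the cographic matroid. The complication is that the subsets $S$ over which \autoref{Assigningpolynomial-Express} ranges do \emph{not} form a matroid, so one cannot cite the matroid theorem; instead I would adapt its sign-reversing involution. Throughout fix $A$ and $b\in Z(G,A)$ with $\alpha=\alpha_{G,b}$ as in the hypothesis, and for a vertex set $X$ write $b(X)=\sum_{v\in X}b(v)$; call $S\subseteq E(G)$ \emph{admissible} if $G-S$ is $b$-compatible. Two preliminary observations: (i) admissibility is closed under passing to subsets, since for $S'\subseteq S$ every component of $G-S'$ is a union of components of $G-S$ and hence has zero $b$-sum once those of $G-S$ do; (ii) consequently, if an admissible $S$ contained a bond $C$ of $G$, then $C$ would be admissible, i.e.\ a $b$-compatible bond, and $C\setminus\{\max_\prec C\}\subseteq S$ would be a $b$-compatible broken bond. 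Thus an admissible $S$ with no $b$-compatible broken bond contains no bond at all, so $c(G-S)=c(G)$ and $m(G-S)=m(G)-|S|$. It therefore suffices to show that in $\sum_{S\text{ admissible}}(-1)^{|S|}k^{m(G-S)}$ the terms coming from admissible $S$ that \emph{do} contain a $b$-compatible broken bond cancel in pairs with the same exponent of $k$; comparing the surviving sum $\sum_i(-1)^i\bigl(\#\{S:|S|=i,\ S\text{ admissible, no }b\text{-compatible broken bond}\}\bigr)k^{m(G)-i}$ with $F(G,\alpha;k)=\sum_i(-1)^ia_i(G,\alpha)k^{m(G)-i}$ yields the formula.

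To build the involution, take such an $S$, choose among all $b$-compatible bonds $C$ with $C\setminus\{\max_\prec C\}\subseteq S$ the one whose $\prec$-maximum is smallest, call that bond $C^{*}$ and its maximum $e^{*}=e^{*}(S)$, and set $\phi(S)=S\triangle\{e^{*}\}$. Then $\phi$ reverses signs since $|\phi(S)|=|S|\pm1$, and $\phi(S)$ still contains a $b$-compatible broken bond because $C^{*}\setminus\{e^{*}\}$ avoids $e^{*}$ and hence survives the symmetric difference. That $\phi$ is an involution follows from $e^{*}(\phi(S))=e^{*}(S)$: any $b$-compatible bond $C'$ with $C'\setminus\{\max_\prec C'\}\subseteq\phi(S)$ and $\max_\prec C'\prec e^{*}$ would either miss $e^{*}$, forcing $C'\setminus\{\max_\prec C'\}\subseteq S$ against minimality of $e^{*}(S)$, or contain $e^{*}$, forcing $\max_\prec C'\succ e^{*}$ — both impossible. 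The exponent $m(G-S)$ is preserved because in $G-(C^{*}\setminus\{e^{*}\})$ the edge $e^{*}$ is a bridge (as $C^{*}$ is a minimal edge cut while $C^{*}\setminus\{e^{*}\}$ is not), a bridge stays a bridge in every spanning subgraph still containing it, and $G-(S\setminus\{e^{*}\})$ is such a subgraph; hence $m(G-(S\setminus\{e^{*}\}))=m(G-S)$, which gives $m(G-\phi(S))=m(G-S)$ whether $e^{*}\in S$ or not.

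The one step that genuinely uses $b$-compatibility — and the crux — is that $\phi(S)$ is again admissible. If $e^{*}\in S$ this is immediate from observation (i). If $e^{*}\notin S$, write $C^{*}=E_G(Y,\bar Y)$ with $\bar Y=V(G)\setminus Y$, $G[Y]$ connected, and $Y$ a component of $G-C^{*}$, so that admissibility of $C^{*}$ gives $b(Y)=0$; say $e^{*}=uv$ with $u\in Y$. Since $C^{*}$ is \emph{all} the edges of $G$ between $Y$ and $\bar Y$ and $C^{*}\setminus\{e^{*}\}\subseteq S$, the edge $e^{*}$ is the unique edge of $G-S$ leaving $Y$; hence $e^{*}$ is a bridge of $G-S$, every component of $(G-S)[Y]$ other than the one, $K_{1}$, containing $u$ has no edge leaving it in $G-S$ and is therefore a full component of $G-S$, and the component $K$ of $G-S$ through $e^{*}$ splits as $K=K_{1}\sqcup K_{2}$ with $K_{2}\subseteq\bar Y$. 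By admissibility of $S$ every component of $G-S$ has zero $b$-sum; in particular $b(K)=0$ and each component of $(G-S)[Y]$ other than $K_{1}$ has zero $b$-sum. Since those components together with $K_{1}$ partition $Y$ and $b(Y)=0$, we get $b(K_{1})=0$, whence $b(K_{2})=b(K)-b(K_{1})=0$. Every component of $G-(S\cup\{e^{*}\})$ is then either a component of $G-S$ or one of $K_{1},K_{2}$, so all have zero $b$-sum and $\phi(S)$ is admissible. This closes the involution and the cancellation, proving the coefficient interpretation of the $a_i(G,\alpha)$.
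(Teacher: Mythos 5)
Your proof is correct and follows essentially the same route as the paper: the paper first proves the identity for $F^*(G,b;A)$ by partitioning the $b$-compatible subsets according to the first broken bond they contain and cancelling within each class by toggling the corresponding maximal bond element (its Lemma~3.3 is exactly your ``crux'' that $G-S-e^{*}$ stays $b$-compatible, proved the same way), and then transfers the result to $F(G,\alpha;k)$ by interpolation. Your packaging of the cancellation as a single sign-reversing, exponent-preserving involution keyed to the $\prec$-minimal bond maximum, applied directly to the polynomial identity of Theorem~\ref{Assigningpolynomial-Express}, is only a cosmetic variant of the same argument.
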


Our last main result will establish a unified order-preserving relation from $\{0,1\}$-assignings to assigning polynomials when both are naturally ordered.
\begin{theorem}[Comparison of Coefficients]\label{Comparison}
Suppose $\{0,1\}$-assignings $\alpha,\alpha'$ of $G$ satisfy $\alpha=\alpha_{G,b}$ and $\alpha'=\alpha_{G,b'}$ for locally zero-sum functions $b\in Z(G,A)$ and $b'\in Z(G,A')$. If $\alpha(X)\le \alpha'(X)$ for all $X\in \Lambda(G)$, then the signless coefficients of the assigning polynomials $F(G,\alpha;k)$ and $F(G,\alpha';k)$ satisfy 
\[
a_i(G,\alpha)\le a_i(G,\alpha') \quad \mbox{ for }\quad i=0,1,\ldots,m(G).
\]
In particular, $a_i(G,0)\le a_i(G,\alpha_{G,b})$ for all $b\in Z(G,A)$.
\end{theorem}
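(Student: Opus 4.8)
The plan is to reduce the inequality to the combinatorial count of \autoref{NBB-Theorem}, re-express that count purely in terms of the assignings, and then prove a monotonicity statement by flipping one value of the assigning at a time. The first step is a \emph{translation lemma}: for $b\in Z(G,A)$ and $S\subseteq E(G)$, the graph $G-S$ is $b$-compatible if and only if $\alpha_{G,b}(X)=0$ for every $X\in\Lambda(G)$ that is a union of vertex sets of connected components of $G-S$; and for a bond $B$ of $G$ with the two sides $C_1,C_2$ of the split component, one has $C_1,C_2\in\Lambda(G)$ and $G-B$ is $b$-compatible iff $\alpha_{G,b}(C_1)=\alpha_{G,b}(C_2)=0$. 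The forward directions are immediate. For the converse of the first assertion, fix a component $C$ of $G$, let $D_1,\dots,D_r$ be the vertex sets of the components of $G-S$ contained in $C$ (nothing to prove if $r=1$, since $\sum_{v\in V(C)}b(v)=0$), contract each $D_j$ to a point to obtain a connected multigraph $H_C$ on $\{1,\dots,r\}$ whose edges are the edges of $S$ inside $C$, and fix a spanning tree $T$ of $H_C$ rooted at $1$. For a non-root $j$, the vertex set $T_j$ of the subtree of $T$ rooted at $j$ and its complement both span connected subgraphs of $H_C$, so $X_j:=\bigcup_{j'\in T_j}D_{j'}$ lies in $\Lambda(G)$; the hypothesis gives $\sum_{v\in X_j}b(v)=0$, and a leaves-up induction together with $\sum_{v\in V(C)}b(v)=0$ forces $\sum_{v\in D_j}b(v)=0$ for every $j$. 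In particular both predicates ``$G-S$ is $b$-compatible'' and ``$S$ contains a $b$-compatible broken bond'' depend on $b$ only through $\alpha_{G,b}$.

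With this in hand, for an \emph{arbitrary} $\{0,1\}$-assigning $\beta$ of $G$ (not assumed of the form $\alpha_{G,b}$), call a bond $B$ of $G$ a \emph{$\beta$-bond} if its two sides $C_1,C_2$ satisfy $\beta(C_1)=\beta(C_2)=0$ (note $C_1,C_2\in\Lambda(G)$), and let $\mathcal S_i(\beta)$ be the family of $i$-element sets $S\subseteq E(G)$ such that $\beta(X)=0$ for every $X\in\Lambda(G)$ that is a union of components of $G-S$, and such that $S$ contains no \emph{broken $\beta$-bond}, i.e.\ no set $B\setminus\{\max_{\prec}B\}$ with $B$ a $\beta$-bond. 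By the translation lemma and \autoref{NBB-Theorem}, $a_i(G,\alpha_{G,b})=|\mathcal S_i(\alpha_{G,b})|$ for every $b\in Z(G,A)$. Hence it suffices to prove the purely combinatorial monotonicity
\[
\beta\le\beta'\ \text{pointwise on }\Lambda(G)\ \Longrightarrow\ \mathcal S_i(\beta)\subseteq\mathcal S_i(\beta')\quad\text{for all }i;
\]
applying this with $\beta=\alpha$, $\beta'=\alpha'$ and taking cardinalities yields the theorem, and the ``in particular'' clause follows by taking $\beta=\alpha_{G,0}=0$.

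By interpolation (changing the value at one set of $\Lambda(G)$ from $0$ to $1$ at a time) it is enough to treat a single flip: $\beta^+$ agrees with $\beta$ except that $\beta^+(X_0)=1>0=\beta(X_0)$ for one $X_0\in\Lambda(G)$. Let $S\in\mathcal S_i(\beta)$. Every $\beta^+$-bond is a $\beta$-bond since $\beta\le\beta^+$, so $S$ contains no broken $\beta^+$-bond. It remains to verify $\beta^+(X)=0$ for every $X\in\Lambda(G)$ that is a union of components of $G-S$; as $\beta^+=\beta$ off $X_0$, the only possibility to exclude is that $X_0$ is itself such a union. Suppose it were. Since $X_0\in\Lambda(G)$, it is a proper subset of a single component $C$ of $G$ with $G[X_0]$ and $G[C\setminus X_0]$ connected, so the set $B_0$ of edges of $G$ joining $X_0$ to $C\setminus X_0$ is a (nonempty) bond with sides $X_0$ and $C\setminus X_0$; moreover no edge of $B_0$ survives in $G-S$, so $B_0\subseteq S$, and $C\setminus X_0$ is the union of the remaining components of $G-S$ inside $C$, hence also lies in $\Lambda(G)$ and is a union of components of $G-S$. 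Then $\beta(X_0)=\beta(C\setminus X_0)=0$ because $S\in\mathcal S_i(\beta)$, so $B_0$ is a $\beta$-bond and $B_0\setminus\{\max_{\prec}B_0\}\subseteq B_0\subseteq S$ is a broken $\beta$-bond contained in $S$, contradicting $S\in\mathcal S_i(\beta)$. Therefore $X_0$ is not a union of components of $G-S$, whence $S\in\mathcal S_i(\beta^+)$ and the induction goes through.

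The main obstacle is the translation lemma: because $a_i(G,\alpha)$ and $a_i(G,\alpha')$ a priori refer to unrelated groups $A$ and $A'$, one must first show that ``$G-S$ is $b$-compatible'' and the notion of $b$-compatible (broken) bond are determined by $\alpha_{G,b}$ alone, which is exactly the spanning-tree argument above. Once that is available, the decisive (and at first sight counterintuitive) point is that a single $0\mapsto 1$ change of the assigning never deletes a counted subset — a deleted one would already have contained the broken $\beta$-bond $B_0\setminus\{\max_{\prec}B_0\}$ — so the families $\mathcal S_i(\cdot)$ are themselves monotone under refinement of the assigning, not merely their cardinalities, and the comparison of coefficients drops out.
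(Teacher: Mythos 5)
Your proof is correct, but it takes a longer and more self-contained route than the paper's. The paper works directly with the families $\mathcal{B}''(G,b)$ and $\mathcal{B}''(G,b')$ counted in \autoref{NBB-Theorem} and establishes $\mathcal{B}''(G,b)\subseteq\mathcal{B}''(G,b')$ in essentially one step: if some $S\in\mathcal{B}''(G,b)$ contained a bond $F$ of $G$, then by \autoref{Inclusion} the graph $G-F$ would be $b$-compatible, so $F$ would be a $b$-compatible bond and $F\setminus e_F\subseteq S$ a forbidden $b$-compatible broken bond; hence such an $S$ contains no bond of $G$ at all, $G-S$ has the same components as $G$, and $b'$-compatibility of $G-S$ is inherited from that of $G$ for free, while the broken-bond condition transfers because $\alpha\le\alpha'$ forces every $b'$-compatible bond to be $b$-compatible. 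Your argument hits the same punchline — a counted $S$ cannot realize any $X_0\in\Lambda(G)$ as a union of components of $G-S$ without swallowing the bond $E[X_0,C\setminus X_0]$ and hence a broken bond — but packages it differently: you first prove a translation lemma, via contracting the components of $G-S$ and a spanning-tree induction, showing that ``$G-S$ is $b$-compatible'' and the notion of $b$-compatible (broken) bond depend on $b$ only through $\alpha_{G,b}$; you then define the counted family $\mathcal{S}_i(\beta)$ for an arbitrary $\{0,1\}$-assigning $\beta$ and prove monotonicity by flipping one value of the assigning from $0$ to $1$ at a time. This buys something the paper does not make explicit: a direct combinatorial proof that $a_i(G,\alpha)$ is determined by $\alpha$ alone rather than by a realizing pair $(A,b)$, and a monotone family $\mathcal{S}_i(\cdot)$ defined for all assignings, realizable or not, which neatly sidesteps the worry that the two groups $A$ and $A'$ are unrelated. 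The cost is length: the translation lemma and the flip-by-flip interpolation are not needed once one observes, as the paper does, that the sets being counted contain no bonds whatsoever.
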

\paragraph*{1.3. Organization.}
\autoref{Sec-2} is devoted to showing \autoref{Flow-Polnomial} and \autoref{Assigningpolynomial-Express}. In \autoref{Sec-3}, we focus on the proof of \autoref{NBB-Theorem} and its application to group connectivity problems in groups. Finally, we verify \autoref{Comparison} and provide the decomposition formulas for the number of (nowhere-zero) edge-valued functions using two different counting methods in \autoref{Sec-4}.
\section{Counting formula}\label{Sec-2}
In this section, we will prove \autoref{Flow-Polnomial} (Counting Formula) and \autoref{Assigningpolynomial-Express}. Let us review some necessary notations and definitions on graphs. For a graph $G$, a graph $G'$ is a subgraph of $G$ if $V(G')\subseteq V(G)$ and $E(G')\subseteq E(G)$. In particular, a subgraph $G'$ of $G$ is called a {\em spanning subgraph} if $V(G')=V(G)$. We say that $G'$ is the {\em vertex-induced subgraph} spanned by $V(G')\subseteq V(G)$ if $G'$ contains all edges of $G$ that join two vertices in $V(G')$, denoted by $G[V(G')]$.  For an edge subset $E'\subseteq E(G)$, $G-E'$ is the subgraph obtained by removing the edges in $E'$. Conversely, suppose $G'$ is a proper subgraph containing no edge $e\in E(G)$, we denote by $G'+e$ the subgraph of $G$ by adding the edge $e$.

An edge subset $F\subseteq E(G)$  is an {\em edge cut} in $G$ if there exists a partition \{X,Y\} of $V(G)$ such that $F=E[X,Y]$, where $E[X,Y]$ is the  set of edges of $G$ with one end in $X$ and the other end in $Y$. A minimal nonempty edge cut in $G$ is called a {\em bond}. One should observe that a bond $F$ of $G$ corresponds precisely to a unique component  of $G$ containing $F$.

Below we will prove the first main result using the M\"obius Inversion Formula in \cite[Theorem 2.25]{Bondy2008}. The M\"obius Inversion Formula states that if the function $g: 2^{E(G)}\to A$ is defined as
\[
g(S):=\sum_{S\subseteq X\subseteq E(G)}f(X) \quad\mbox{ for }\quad \forall\,S\subseteq E(G),
\]
then, for all $S\subseteq E(G)$,
\begin{equation}\label{Inversion}
f(S)=\sum_{S\subseteq X\subseteq E(G)}(-1)^{|X|-|S|}g(X),
\end{equation}
where $f$ is a function from the family $2^{E(G)}$ of all edge subsets of $E(G)$ to $A$.
\begin{proof}[Proof of \autoref{Flow-Polnomial}]
For any $S\subseteq E(G)$, let $F_S(G,b;A)$ be the number of $(A,b)$-flows $f$ of $G$ satisfying $f(e)=0$ for all $e\in S$ , and $F^*_S(G,b;A)$ be the number of $(A,b)$-flows $f$ of $G$ such that $f(e)=0$ for all $e\in S$ and $f(e)\ne 0$ for $e\in E(G)\setminus S$. Then
\[
F_{S}(G,b;A)=F(G-S,b;A)\quad\And\quad F^*_\emptyset(G,b;A)=F^*(G,b;A).
\]
Immediately, we have
\[
F(G,b;A)=\sum_{S\subseteq E(G)}F^*_S(G,b;A).
\]
It follows from \eqref{Inversion} that
\begin{equation}\label{Inclusion-Exclusion}
F^*(G,b;A)=\sum_{S\subseteq E(G)}(-1)^{|S|}F_S(G,b;A)=\sum_{S\subseteq E(G)}(-1)^{|S|}F(G-S,b;A).
\end{equation}
Since $b$ is a locally $A$-valued zero-sum function on $G$, we have $F(G,b;A)\ne 0$. For a fixed $(A,b)$-flow $f$ of $G$, every $(A,b)$-flow $f'$ of $G$ can give rise to a unique $A$-flow $f'-f$ of $G$. Conversely, every $A$-flow $f''$ of $G$ also gives rise to a unique $(A,b)$-flow $f+f''$ of $G$. It follows from the classical flow theory that
\begin{equation}\label{Eq0}
F(G,b;A)=F(G,0;A)=|A|^{m(G)}.
\end{equation}
Notice that the corresponding boundary function $\partial f$ of each function $f\in A^{E(G)}$ is a locally $A$-valued zero-sum function. So, $F(G,b;A)=0$ if $G$ is $b$-incompatible. Together with \eqref{Eq0}, we obtain that $F^*(G,b;A)$ in \eqref{Inclusion-Exclusion} can be simplified to the following form
\[
F^*(G,b;A)=\sum_{\substack{S\subseteq E(G),\\G-S\mbox{ is $b$-compatible}}}(-1)^{|S|}|A|^{m(G-S)}.
\]
This finishes the proof.
\end{proof}

Next, we proceed to verify \autoref{Assigningpolynomial-Express}.
\begin{proof}[Proof of \autoref{Assigningpolynomial-Express}]
For the fixed $b$, define the polynomial $P(G,b;k)$ in $k$ as follows:
\[
P(G,b;k)=\sum_{\substack{S\subseteq E(G),\\G-S\mbox{ is $b$-compatible}}}(-1)^{|S|}k^{m(G-S)},
\]
where the degree of $P(G,b;k)$ is at most $m(G)$. Consider the Abelian group $A^i=A\times\cdots\times A$ consisting of all $i$-tuples $(a_1,\ldots,a_i)$  with each component $a_j\in A$, for $i=1,2,\ldots, m(G)+1$. Choose an element $b_j=(b_{j1},\ldots,b_{ji})$ in $A^i$ such that the first component $b_{j1}=b_{11}$ and all remaining components equal $0$ for each $i$. Note $b_1=b$. Then, for any $S\subseteq E(G)$ and every $i=1,\ldots,m(G)+1$, $G-S$ is $b$-compatible if and only if $G-S$ is $b_i$-compatible. Together with \autoref{Flow-Polnomial}, for $i=1,2,\ldots, m(G)+1$, we arrive at
\[
F^*(G,b_i;A^i)=\sum_{\substack{S\subseteq E(G),\\G-S\mbox{ is $b$-compatible}}}(-1)^{|S|}|A^i|^{m(G-S)}
\]
It follows that $P(G,b;|A^i|)=F^*(G,b_i;A^i)$ for every $i$. Noticing $\alpha_{G,b_i}=\alpha$ for each $i$, we derive $F(G,\alpha;|A^i|)=F^*(G,b_i;A^i)$ via \autoref{Polynomial-Theorem}. Therefore,  for $i=1,\ldots, m(G)+1$, we have
\[
F(G,\alpha;|A^i|)=P(G,b;|A^i|).
\]
The fundamental theorem of algebra says that if two polynomials of the degrees at most $n$ agree on $n+1$ distinct values, then they are the same polynomial. Since the orders of $A^1,\ldots, A^{m(G)+1}$ are distinct, and both $F(G,\alpha;k)$ and $P(G,b;k)$ are the polynomials in $k$ of degree at most $m(G)$,  the fundamental theorem of algebra implies $F(G,\alpha;k)=P(G,b;k)$. Namely, $F(G,\alpha;k)$ can be expressed as
\[
F(G,\alpha;k)=\sum_{\substack{S\subseteq E(G),\\G-S\mbox{ is $b$-compatible}}}(-1)^{|S|}k^{m(G-S)},
\]
which completes the proof.
\end{proof}

\section{Generalization of Broken Bond Theorem} \label{Sec-3}
\autoref{NBB-Theorem} provides a counting interpretation of the signless coefficients of the assigning polynomials in terms of $b$-compatible broken bonds. In this section, we focus on showing \autoref{NBB-Theorem} (Generalized Broken Bond Theorem) and exploring its applications to group connectivity problems of graphs. To this end, the following definitions and lemmas are required. A partition $\pi$ of a finite set $S$ is a collection $\{B_1,\ldots,B_k\}$ of nonempty subsets of $S$ such that
\[
\bigsqcup_{i=1}^{k}B_i=S\quad\And\quad B_i\cap B_j=\emptyset\quad\For\quad  i\ne j\in [k],
\]
where each $B_i$ is called a {\em block} of $\pi$. Associated with a graph $G$, a partition $\pi=\{B_1,\ldots,B_k\}$ of $V(G)$ is referred to as the {\em connected partition} if each subgraph $G[B_i]$ spanned by the block $B_i$ is connected.  In this sense, each $B_i$ of  $V(G)$ is said to be a {\em connected block}. In particular, all the components $G_1,\ldots,G_k$ of $G$ automatically induces a connected partition $\big\{V(G_1),\ldots,V(G_k)\big\}$ of $V(G)$. Then we observe the next lemmas.
\begin{lemma}\label{Inclusion}
Let $S_1\subseteq S_2\subseteq E(G)$ and $b\in A^{V(G)}$. If $G-S_2$ is $b$-compatible, then $G-S_1$ is also $b$-compatible.
\end{lemma}
\begin{proof}
Since $S_1\subseteq S_2$,  each connected block corresponding to a component of $G-S_1$ can be expressed as the disjoint union of some connected blocks corresponding to the components of $G-S_2$. Together with that $G-S_2$ is $b$-compatible, we can deduce that $G-S_1$ is also $b$-compatible.
\end{proof}

\begin{lemma}\label{No-Broken-Bond}
Let $G$ be a graph with a total order $\prec$ on $E(G)$ and $S\subseteq E(G)$ contain a $b$-compatible broken bond $B$ for some $b\in Z(G,A)$.
Suppose $B=F\setminus e_F$ for some $b$-compatible bond $F$ of $G$, where $e_F$ is the maximal member in $F$ with respect to the total order $\prec$. If $G-S$ is $b$-compatible and $e_F\notin S$, then $G-S-e_F$ is also $b$-compatible.
\end{lemma}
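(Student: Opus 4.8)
The plan is to track how the components of $G-S$ change under deletion of the single extra edge $e_F$, using the bond structure of $F$ together with the $b$-compatibility of both $G-S$ and $G-F$.

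First I would fix notation. Since $F$ is a bond of $G$, there is a unique component $G_0$ of $G$ with $F\subseteq E(G_0)$, and $G_0-F$ has exactly two components $H_1,H_2$ with $F=E[V(H_1),V(H_2)]$; write $e_F=u_1u_2$ with $u_i\in V(H_i)$ (an edge lying in an edge cut is not a loop, so $u_1\ne u_2$). Two containments will be used repeatedly. On one hand $F\cap S=B$, because $B\subseteq S$ and $e_F\notin S$, so $S\cup\{e_F\}=S\cup F$ and hence $G-S-e_F=G-(S\cup F)\subseteq G-F$. On the other hand $B\subseteq S$ gives $G-S\subseteq G-B$, and since $B=F\setminus e_F$ leaves $e_F$ as the only surviving edge of the cut, $G_0-B$ is connected; consequently every vertex of $V(G_0)$ has its $(G-S)$-component contained in $V(G_0)=V(H_1)\sqcup V(H_2)$.

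If $e_F\notin E(G-S)$ there is nothing to prove, so assume $e_F\in E(G-S)$ and let $C$ be the component of $G-S$ containing it. Because $e_F$ is the only edge of $G-S$ joining $V(H_1)$ to $V(H_2)$, deleting $e_F$ splits $C$ into exactly two components $C_1\ni u_1$ and $C_2\ni u_2$ (so $V(C)=V(C_1)\sqcup V(C_2)$), while all other components of $G-S-e_F$ are unchanged components of $G-S$; moreover, since any walk of $G-S-e_F\subseteq G-F$ starting at $u_i$ stays inside $V(H_i)$, we get $V(C_i)\subseteq V(H_i)$. The key structural step is then to show $V(H_1)=V(C_1)\sqcup\bigsqcup_{j}V(K_j)$, where $K_1,\dots,K_p$ are precisely those components of $G-S$ whose vertex sets lie in $V(H_1)$: for $y\in V(H_1)\setminus V(C_1)$, its $(G-S)$-component lies in $V(G_0)$ by the paragraph above and cannot meet $V(H_2)$ (a path reaching $V(H_2)$ would have to traverse the unique crossing edge $e_F$, forcing $y\in V(C_1)$), hence it lies inside $V(H_1)$ and is different from $C$.

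To finish, I would invoke $b$-compatibility twice: $G-F$ is $b$-compatible, so $\sum_{v\in V(H_1)}b(v)=0$; $G-S$ is $b$-compatible, so $\sum_{v\in V(K_j)}b(v)=0$ for each $j$ and $\sum_{v\in V(C)}b(v)=0$. The structural identity then gives $\sum_{v\in V(C_1)}b(v)=0$, whence $\sum_{v\in V(C_2)}b(v)=\sum_{v\in V(C)}b(v)-\sum_{v\in V(C_1)}b(v)=0$; and every other component of $G-S-e_F$ is a component of the $b$-compatible graph $G-S$. Thus all components of $G-S-e_F$ have vanishing $b$-sum, i.e. $G-S-e_F$ is $b$-compatible. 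The main obstacle is the bookkeeping in the structural step: showing that the vertices of $V(H_1)$ no longer reachable from $u_1$ after deleting $e_F$ assemble into whole components of $G-S$ — neither leaking outside $V(G_0)$ nor merging into $C$ through $V(H_2)$ — and this is exactly what the two containments set up at the start are designed to control.
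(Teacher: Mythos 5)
Your proposal is correct and follows essentially the same route as the paper: identify the two sides $H_1,H_2$ of the bond $F$ inside its component of $G$, observe that deleting $e_F$ splits only the component $C$ of $G-S$ containing it into $C_1,C_2$, and compute $\sum_{v\in V(C_1)}b(v)$ by subtracting the (zero) sums over the other $(G-S)$-components lying in $V(H_1)$ from the (zero) sum over $V(H_1)$ coming from the $b$-compatibility of $G-F$. Your write-up is, if anything, more explicit than the paper's about why $V(H_1)$ decomposes into $V(C_1)$ together with whole components of $G-S$; the only cosmetic remark is that the case $e_F\notin E(G-S)$ cannot occur, since $e_F\notin S$ forces $e_F\in E(G-S)$.
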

\begin{proof}
Suppose $e_F$ is incident with vertices $u$ and $v$. Let $G_{uv}, G_u$ and $G_v$ be the connected components of $G-S$ containing the edge $e_F=uv$,  of $G-S-e_F$ containing the vertex $u$ and of $G-S-e_F$ containing the vertex $v$, respectively. Since $B$ is a broken bond and $F=B\sqcup e_F$ is a bond of $G$, $G_{uv}-e_F$ is partitioned into two connected components $G_u$ and $G_v$ in $G-S-e_F$. This implies that the connected partition $\pi_{G-S-e_F}$ of $V(G)$ induced by the components of $G-S-e_F$ is obtained from the connected partition $\pi_{G-S}$ of $V(G)$ corresponding to the components of $G-S$ by partitioned the connected block $V(G_{uv})$ of $G-S$ into two extra connected blocks $V(G_u)$ and $V(G_v)$ of $G-S-e_F$. Hence, to prove that $G-S-e_F$ is $b$-compatible, it is enough to show
\begin{equation}\label{Eq1}
\sum_{w\in V(G_u)}b(w)=0\quad\And\quad \sum_{w\in V(G_v)}b(w)=0.
\end{equation}
Let $G_F$ be the unique component of $G$ containing $F$. As $F$ is a bond of $G$, we can assume that $G_F-F$ is partitioned into two components $G_{F,u}$ containing $G_u$ as a subgraph of $G_F$ and $G_{F,v}$ containing $G_v$ as a subgraph of $G_F$. Noting that $G$ and $G-F$ are $b$-compatible, we have
\begin{equation}\label{Eq4}
\sum_{w\in V(G_{F,u})}b(w)=0\quad\And\quad \sum_{w\in V(G_{F,v})}b(w)=0.
\end{equation}
Let $\pi_{G_{F,u}}=\big\{B_1,\ldots,B_k,V(G_u)\big\}$ be a partition of $V(G_{F,u})$, where each $B_i$ is the connected block induced by the corresponding component of $G-S$. Since $G-S$ is $b$-compatible, for each block $B_i$ we have $\sum_{w\in B_i}b(w)=0$. Together with the first equation in \eqref{Eq4}, we acquire
\[
\sum_{w\in V(G_u)}b(w)=\sum_{w\in V(G_{F,u})}b(w)-\sum_{i=1}^k\sum_{w\in B_i}b(w)=0.
\]
Likewise, we can deduce $\sum_{w\in V(G_v)}b(w)=0$. So, we have shown \eqref{Eq1} holds. Namely, $G-S-e_F$ is $b$-compatible.
\end{proof}

\begin{lemma}\label{NBB-Theorem0}
Let $G$ be a graph with a total order $\prec$ on $E(G)$. For any $b\in Z(G,A)$, the counting function $F^*(G,b;A)$ can be written as $F^*(G,b;A)=\sum_{i=0}^{m(G)}(-1)^ia_i(G,b)|A|^{m(G)-i}$, where each $a_i(G,b)$ is the number of subsets $S$ of $E(G)$ having $i$ edges such that $G-S$ is $b$-compatible and $S$ contains no $b$-compatible broken bonds with respect to $\prec$.
\end{lemma}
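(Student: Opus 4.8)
The plan is to start from the counting formula of Theorem~\ref{Flow-Polnomial}, namely $F^*(G,b;A)=\sum_{G-S\text{ is }b\text{-compatible}}(-1)^{|S|}|A|^{m(G-S)}$, and group the sign-weighted terms by exploiting the fixed total order $\prec$ on $E(G)$. The standard broken-circuit/broken-bond mechanism is a sign-reversing involution: I would pair up certain $b$-compatible spanning subgraphs so that their contributions cancel, leaving only those $S$ that contain no $b$-compatible broken bond. Concretely, for a set $S\subseteq E(G)$ with $G-S$ $b$-compatible, I first need a way to detect whether $S$ contains a $b$-compatible broken bond $B=F\setminus e_F$ with $e_F\notin S$; this is exactly the situation in which Lemma~\ref{No-Broken-Bond} guarantees that $G-S-e_F$ is again $b$-compatible, so $S\cup\{e_F\}$ is also a valid term in the sum. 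The pairing $S\leftrightarrow S\cup\{e_F\}$ (for a canonically chosen bond $F$) then cancels $(-1)^{|S|}|A|^{m(G-S)}$ against $(-1)^{|S\cup\{e_F\}|}|A|^{m(G-S-e_F)}$, provided $m(G-S)=m(G-S-e_F)$, which holds precisely because deleting $e_F$ from $G-S$ splits one component into two (so $c$ and $|E|$ each change in a way that preserves $m$).

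The key structural step is to set up the involution carefully so that it is well-defined and fixed-point-free on the non-cancelling part. I would fix, for each $b$-compatible bond $F$ of $G$, its maximal edge $e_F$, and order the bonds (say by the $\prec$-largest edge on which two bonds differ, or any fixed linear order). Given $S$ with $G-S$ $b$-compatible, define $\Phi(S)$ by looking at the $\prec$-\emph{smallest} "critical" edge: among all $b$-compatible bonds $F$ such that $F\setminus e_F\subseteq S$, pick the one whose $e_F$ is smallest; if $e_F\notin S$ set $\Phi(S)=S\cup\{e_F\}$, and if $e_F\in S$ set $\Phi(S)=S\setminus\{e_F\}$. One must check: (i) in the removal case $G-S+e_F$ is still $b$-compatible (immediate from Lemma~\ref{Inclusion}) and still has the same $b$-compatible broken bond $F\setminus e_F$ witnessed by the same smallest $e_F$; (ii) in the addition case Lemma~\ref{No-Broken-Bond} gives $b$-compatibility of $G-S-e_F$, and one verifies the selected critical bond is unchanged; (iii) $\Phi$ is an involution; (iv) $S$ is a fixed point iff no $b$-compatible bond $F$ has $F\setminus e_F\subseteq S$, i.e.\ iff $S$ contains no $b$-compatible broken bond. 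Then $\sum_{S:\,G-S\ b\text{-comp.}}(-1)^{|S|}|A|^{m(G-S)}$ collapses to the sum over fixed points, giving $F^*(G,b;A)=\sum_{i}(-1)^i a_i(G,b)|A|^{m(G)-i}$ with $a_i(G,b)$ counting the $i$-edge fixed points; note $m(G-S)=m(G)-|S|$ for any $S$ with $G-S$ $b$-compatible, since such $G-S$ has the same components as $G$ on each block only after further splitting—more precisely $c(G-S)-|E(G-S)| = c(G)-|E(G)|+ (\text{number of independent cuts removed})$, which I would phrase via $m(G-S)=m(G)-|S|+(\text{corank contribution})$; I should double-check that on the support of the involution the relevant identity $m(G-S)=m(G)-|S|$ in fact holds exactly—this is what makes the two paired monomials equal.

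The main obstacle I anticipate is ensuring the canonical choice in the involution is genuinely consistent, i.e.\ that adding or removing the distinguished edge $e_F$ does not change \emph{which} bond $F$ is selected as the smallest critical one; this is the delicate bookkeeping at the heart of every broken-circuit argument. The subtlety here, beyond the classical Whitney/matroid setting, is that "$b$-compatible bond" is not matroid-closed—deleting edges can destroy $b$-compatibility—so I cannot simply quote a known theorem; I must verify directly that toggling $e_F$ preserves $b$-compatibility of all relevant deletions (this is exactly the content of Lemmas~\ref{Inclusion} and~\ref{No-Broken-Bond}, which is why they were proved first) and that it preserves the set of bonds $F$ with $F\setminus e_F\subseteq S$ that lie strictly below the pivot. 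Once the involution is shown to be well-defined, fixed-point-free off the claimed set, and monomial-preserving in the sense $(-1)^{|S|}|A|^{m(G-S)}=-(-1)^{|\Phi(S)|}|A|^{m(G-\Phi(S))}$, the conclusion is immediate.
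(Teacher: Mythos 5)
Your proposal follows essentially the same route as the paper: a sign-reversing, monomial-preserving pairing $S\leftrightarrow S\cup\{e_F\}$ built from Lemmas~\ref{Inclusion} and~\ref{No-Broken-Bond}, with the canonical choice of pivot protected by the fact that $e_F$ is $\prec$-larger than every edge of the relevant broken bonds (the paper organizes this as a partition $\mathcal{S}_1,\ldots,\mathcal{S}_{q+1}$ indexed by the first broken bond contained in $S$, which is the same bookkeeping). One correction: your parenthetical claim that $m(G-S)=m(G)-|S|$ holds ``for any $S$ with $G-S$ $b$-compatible'' is false in general (take $S$ a $b$-compatible bond); it holds exactly for the fixed points, and the missing half-line is that a fixed point $S$ can contain no bond at all --- if $F'\subseteq S$ were a bond, Lemma~\ref{Inclusion} would make $F'$ a $b$-compatible bond and $F'\setminus e_{F'}$ a $b$-compatible broken bond inside $S$ --- whence $c(G-S)=c(G)$ and the exponent $m(G)-|S|$ follows.
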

\begin{proof}
Suppose the $b$-compatible graph $G$ has $q\ge 0$ $b$-compatible broken bonds. For every $b$-compatible broken bond $B$, let $e_B$ be the maximal member in $B$ with respect to the total order $\prec$. We can arrange the $b$-compatible broken bonds $B_1,\ldots, B_q$ such that $e_{B_1}\preceq e_{B_2}\cdots\preceq  e_{B_q}$. Let $\mathcal{S}_1$ be the collection of edge subsets $S$ of $G$ containing the $b$-compatible broken bond $B_1$ such that  each corresponding spanning subgraph $G-S$ is $b$-compatible. For $i=2,\ldots,q$, let $\mathcal{S}_i$ be the collection of edge subsets $S$ of $G$  containing the $b$-compatible broken bond $B_i$ but not any of $B_1,\ldots, B_{i-1}$ such that  each corresponding spanning subgraph $G-S$ is $b$-compatible. In addition, let $\mathcal{S}_{q+1}$ be the collection of edge subsets $S$ of $G$ containing no $b$-compatible broken bonds such that each corresponding spanning subgraph $G-S$  is $b$-compatible. Then $\{\mathcal{S}_1,\mathcal{S}_2,\ldots,\mathcal{S}_{q+1}\}$ induces a partition of the set of $b$-compatible spanning subgraphs of $G$. Together with \autoref{Flow-Polnomial}, then the counting function $F^*(G,b;A)$ can be written in the following form
\begin{align}\label{Eq2}
F^*(G,b;A)&=\sum_{\substack{S\subseteq E(G),\\G-S\mbox{ is $b$-compatible}}}(-1)^{|S|}F(G-S,b;A)\nonumber\\
&=\sum_{i=1}^{q+1}\sum_{S\in\mathcal{S}_i}(-1)^{|S|}F\big(G-S,b;A\big).
\end{align}

We first consider $\mathcal{S}_1$. Assume $B_1=F_1\setminus e_{F_1}$ for some $b$-compatible bond $F_1$, where $e_{F_1}$ is the maximal member in $F_1$ with respect to the total order $\prec$. By \autoref{Inclusion}, each member $S$ in $\mathcal{S}_1$ containing $e_{F_1}$  gives rise to a unique member $S\setminus e_{F_1}$ in $\mathcal{S}_1$ that does not contain $e_{F_1}$. Conversely, according to \autoref{No-Broken-Bond}, each member $S$ in $\mathcal{S}_1$ containing no $e_{F_1}$ gives rise to a unique member $S\sqcup e_{F_1}$ in $\mathcal{S}_1$ containing $e_{F_1}$. In addition, since $B_1\sqcup e_{F_1}$ is a bond of $G$, for each $S\in\mathcal{S}_1$ containing $e_{F_1}$, we have $c(G-S)=c(G-S+e_{F_1})+1$. Thus, from \eqref{Eq0}, we obtain
\[
\sum_{S\in\mathcal{S}_1, e_{F_1}\in S}\big((-1)^{|S|}F(G-S,b;A)+(-1)^{|S|-1}F(G-S+e_{F_1},b;A)\big)=0.
\]
Namely, we arrive at $\sum_{S\in\mathcal{S}_1}(-1)^{|S|}F\big(G-S,b;A\big)=0$ in \eqref{Eq2}.

Consider $\mathcal{S}_2$. Assume $B_2=F_2\setminus e_{F_2}$ for some $b$-compatible bond $F_2$, where $e_{F_2}$ is the maximal member in $F_2$ with respect to the total order $\prec$. It follows from the ordering $e_{B_1}\preceq e_{B_2}$ that  $e_{B_1}\preceq e_{B_2}\prec e_{F_2}$. So $e_{F_2}\notin B_1\cup B_2$. Combining this, using the same argument as the case $\mathcal{S}_1$, we can also verify that every element $S$ in $\mathcal{S}_2$ containing no $e_{F_2}$ gives rise to a unique member $S\sqcup e_{F_2}$ in $\mathcal{S}_2$ containing $e_{F_2}$, and vice versa. Then we also have $\sum_{S\in\mathcal{S}_2}(-1)^{|S|}F\big(G-S,b;A\big)=0$ in \eqref{Eq2}.

Likewise, we can deduce the total contribution of the members in $\mathcal{S}_i$ to $F^*(G,b;A)$ is zero for each $i=1,2,\ldots,q$. Therefore, we have
\begin{align}\label{Eq3}
F^*(G,b;A)&=\sum_{S\in\mathcal{S}_{q+1}}(-1)^{|S|}F(G-S,b;A)\nonumber\\
&=\sum_{i=0}^{|E(G)|}\sum_{S\in\mathcal{S}_{q+1},\,|S|=i}(-1)^iF\big(G-S,b;A\big).
\end{align}
As each $S\in\mathcal{S}_{q+1}$ contains no $b$-compatible broken bonds and $G-S$ is $b$-compatible, we can derive from \autoref{Inclusion} that $S$ contains no bonds. This implies
\[
c(G-S)=c(G),\quad S\in\mathcal{S}_{q+1}.
\]
Combining $F(G-S,b;A)=|A|^{m(G-S)}$ in \eqref{Eq0} and \eqref{Eq3}, we have
\[
F^*(G,b;A)=\sum_{i=0}^{m(G)}(-1)^ia_i(G,b)|A|^{m(G)-i},
\]
which completes the proof.
\end{proof}
Now, we turn to the proof of our third main result.
\begin{proof}[Proof of \autoref{NBB-Theorem}]
For the fixed $b$, define the polynomial $P(G,b;k)$ in $k$ as follows:
\[
P(G,b;k)=\sum_{i=0}^{m(G)}(-1)^ia_i(G,b)k^{m(G)-i},
\]
where $a_i(G,b)$ is the number of subsets $S$ of $E(G)$ having $i$ edges such that $G-S$ is $b$-compatible and $S$ contains no $b$-compatible broken bonds with respect to $\prec$. Analogous to the proof of \autoref{Assigningpolynomial-Express}, we can obtain
\[
F(G,\alpha;k)=P(G,b;k).
\]
This implies $a_i(G,\alpha)=a_i(G,b)$ for each $i=0,1,\ldots,m(G)$, which finishes the proof.
\end{proof}

It is worth noting  that the coefficients $a_i(G,\alpha)$ in \autoref{NBB-Theorem} are independent of the choice of edge order and of $b$ whenever $\alpha_{G,b}=\alpha$. When $b$ is the zero function $0$, then $\alpha_{G,0}=0$ and $F(G,0;k)$ is precisely the classical flow polynomial. In this case, the $0$-compatible bond and $0$-compatible broken bond correspond to the bond and broken bond of $G$, respectively. Therefore, \autoref{NBB-Theorem} directly leads to a counting explanation of the coefficients for  the classical flow polynomials. This can be viewed as the dual result of Whitney's celebrated Broken Circuit Theorem \cite{Whitney1932},  since the bonds of  a planar graph are precisely the circuits of its dual graph.
\begin{corollary}[Broken Bond Theorem]
Let $G$ be a graph with a total order $\prec$ on $E(G)$. Write $F(G,0;k)=\sum_{i=0}^{m(G)}(-1)^ia_i(G,0)k^{m(G)-i}$. Then each signless coefficient $a_i(G,0)$ is the number of  spanning subgraphs of $G$ having $i$ edges and containing no broken bonds with respect to $\prec$.
\end{corollary}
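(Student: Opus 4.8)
The plan is to obtain this statement as the special case $b=0$ of \autoref{NBB-Theorem}, so essentially no new work is needed beyond checking that the hypotheses degenerate correctly. First I would fix any finite Abelian group $A$ and take $b$ to be the zero function $0\in A^{V(G)}$. Since the restriction of $0$ to every component of $G$ sums to the identity, we have $0\in Z(G,A)$, so the zero function is an admissible choice. Moreover, for every $X\in\Lambda(G)$ we have $\sum_{v\in X}0=0$, hence $\alpha_{G,0}(X)=0$; that is, $\alpha_{G,0}$ is the zero assigning, which is exactly the $\alpha$ for which $F(G,\alpha;k)=F(G,0;k)$ is the classical flow polynomial. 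This certifies that the hypothesis of \autoref{NBB-Theorem}, namely the existence of an Abelian group $A$ and a locally zero-sum $b\in Z(G,A)$ with $\alpha=\alpha_{G,b}$, is met by the pair $(A,0)$.

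Next I would verify that $b$-compatibility becomes vacuous when $b=0$. For an arbitrary $S\subseteq E(G)$, each connected block $B$ of the connected partition induced by the components of $G-S$ satisfies $\sum_{w\in B}b(w)=\sum_{w\in B}0=0$, so $G-S$ is $0$-compatible for every $S$. Consequently a $0$-compatible bond is exactly a bond of $G$, and a $0$-compatible broken bond, being a $0$-compatible bond with its $\prec$-maximal edge deleted, is exactly a broken bond of $G$.

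Finally I would invoke \autoref{NBB-Theorem} for the pair $(A,0)$. Writing $F(G,0;k)=\sum_{i=0}^{m(G)}(-1)^ia_i(G,0)k^{m(G)-i}$, the theorem asserts that $a_i(G,0)$ equals the number of edge subsets $S\subseteq E(G)$ with $|S|=i$ such that $G-S$ is $0$-compatible, which is automatic by the previous paragraph, and such that $S$ contains no $0$-compatible broken bond, i.e.\ no broken bond. Identifying each such $S$ with the spanning subgraph $(V(G),S)$ of $G$ having exactly $i$ edges, this count is precisely the number of $i$-edge spanning subgraphs of $G$ containing no broken bond with respect to $\prec$, which is the claim.

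The only point requiring care, rather than a genuine obstacle, is the bookkeeping identification of a removed-edge set $S$ with the spanning subgraph whose edge set equals $S$, together with confirming that the $b$-compatibility side condition of \autoref{NBB-Theorem} collapses to a tautology when $b=0$. Both are immediate once the zero function is substituted, so the corollary follows with no further estimation. I would also remark, as the text already anticipates, that this is the flow-theoretic dual of Whitney's Broken Circuit Theorem, since the bonds of a planar graph correspond to the circuits of its dual.
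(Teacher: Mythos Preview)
Your proposal is correct and mirrors the paper's own argument: the paper likewise specializes \autoref{NBB-Theorem} to $b=0$, observes that $\alpha_{G,0}=0$ so $F(G,0;k)$ is the classical flow polynomial, and notes that $0$-compatible (broken) bonds are exactly ordinary (broken) bonds. Your bookkeeping identification of $S$ with the spanning subgraph $(V(G),S)$ is the right one, since in \autoref{NBB-Theorem} it is the $i$-edge set $S$ itself (not $G-S$) that is required to avoid broken bonds.
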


It is a pleasant surprise that the Generalized Broken Bond Theorem provides a new perspective on addressing certain group connectivity problems in graphs. Notably, a well-known property in \cite{Tutte1949} (see also \cite{Zhang1997}) says that if a graph has a nowhere-zero $A$-flow, then it admits a nowhere-zero $A'$-flow for any Abelian group $A'$ whose order is at least the order of $A$. This does not extend to the more general concept of group connectivity observed by Jaeger et al. \cite{Jaeger1992}. Recently,  Langhede and Thomassen in \cite{Langhede-Thomassen2020} confirmed that the property holds if $A'$ is sufficiently large compared to $A$. They also demonstrated that $G$ is always $A$-connected whenever $A$ has a sufficiently large order, which can be explained by \autoref{NBB-Theorem}. More specifically, notice that if a graph $G$ is $A$-connected, then $G$ is 2-edge-connected. Since for any 2-edge-connected graph $G$, the empty set does not contain $b$-compatible broken bonds, the coefficient $a_0(G,\alpha_{G,b})=1$ in the assignning polynomial $F(G,\alpha_{G,b};k)$ follows from \autoref{NBB-Theorem}. Thus, when $G$ is 2-edge-connected and the Abelian group $A$ has a large enough order,  $F^*(G,b;A)=F(G,\alpha_{G,b};|A|)>0$ always holds for each $A$-valued zero-sum function $b\in A^{V(G)}$. This further implies that a 2-edge-connected graph $G$ is always $A$-connected whenever $A$ has a sufficiently large order.

Furthermore, note the basic fact that for a 2-edge-connected graph $G$, $G$ is $A$-connected for some Abelian group $A$ if and only if $F(G,\alpha_{G,b};|A|)>0$ for any $b\in Z(G,A)$. As a consequence, the following corollary is easily observed.
\begin{corollary}\label{Group-Connectivity}
Let $A,A'$ be two Abelian groups of the same order and $G$ an $A'$-connected graph. If  each $\{0,1\}$-assigning $\alpha_{G,b}$ with $b\in Z(G,A)$ is an $\{0,1\}$-assigning $\alpha_{G,b'}$ for some $b'\in Z(G,A')$, then $G$ is $A$-connected.
\end{corollary}

\section{Comparison of coefficients}\label{Sec-4}
\autoref{Comparison} characterizes the change in the signless coefficients of the assigning polynomial $F(G,\alpha;k)$ ($\alpha=\alpha_{G,b}$) as $b$ runs over all locally $A$-valued zero-sum function. In this section, we will verify \autoref{Comparison} via \autoref{NBB-Theorem} and handle the decomposition of the number of (nowhere-zero) edge-valued functions using two different counting methods. Let $G$ be a $b$-compatible graph having a total order $\prec$ on $E(G)$. For convenience, we denote the sets of  $b$-compatible bonds and $b$-compatible broken bonds of $G$ by $\mathcal{B}(G,b)$ and $\mathcal{B}'(G,b)$, respectively. Let $\mathcal{B}''(G,b)$ be the family of edge subsets $S\subseteq E(G)$ such that $G-S$ is $b$-compatible and $S$ does not contain $b$-compatible broken bonds with respect to $\prec$.
\begin{proof}[Proof of \autoref{Comparison}]
First note from \cite[Theorem 2.15]{Bondy2008} that for a subset $X\subseteq V(G)$, if $G$ is connected, then the nonempty edge cut $E[X,V(G)\setminus X]$ is a bond of $G$ if and only if the vertex-induced subgraphs $G[X]$ and $G[V(G)\setminus X]$ are connected. Therefore, for any $X\in\Lambda(G)$, the edge subset $E[X,V(G)\setminus X]$ is a bond of $G$. $\mathcal{B}(G,b)\supseteq \mathcal{B}(G,b')$ follows from that $\alpha_{G,b}(X)\le\alpha_{G,b'}(X)$ for all $X\in\Lambda(G)$. This further implies $\mathcal{B}'(G,b)\supseteq \mathcal{B}'(G,b')$. To obtain  $a_i(G,\alpha)\le a_i(G,\alpha')$, from \autoref{NBB-Theorem}, it is enough to verify $\mathcal{B}''(G,b)\subseteq \mathcal{B}''(G,b')$. Given a member $S\in \mathcal{B}''(G,b)$, we have that $S$ does not contain $b'$-compatible broken bonds according to $\mathcal{B}'(G,b)\supseteq \mathcal{B}'(G,b')$. This means that $S$ contains no $b'$-compatible bonds. Applying \autoref{Inclusion}, we deduce that $S$ contains no bonds of $G$. This implies that the connected partition of $V(G)$ induced by the components of $G-S$ agrees with the connected partition of $V(G)$ induced by the components of $G$. Since $G$  is $b'$-compatible, $G-S$ is also $b'$-compatible. Namely, $S\in \mathcal{B}''(G,b')$, which completes the proof.
\end{proof}

Furthermore, from \autoref{NBB-Theorem}, we can easily see that if $G$ is a bridgeless graph and the $\{0,1\}$-assigning $\alpha$ of $G$ equals $0$, then each signless $a_i(G,0)$ of the classical flow polynomial $F(G,0;k)$ is a positive integer. Applying this to the relation $a_i(G,0)\le a_i(G,\alpha_{G,b})$ for any $b\in Z(G,A)$ in \autoref{Comparison}, we can directly derive the positivity of coefficients for the general assigning polynomial $F(G,\alpha;k)$ of the bridgeless graph $G$.
\begin{corollary}
For any $b\in Z(G,A)$, if $G$ is a bridgeless graph, then each signless coefficient $a_i(G,\alpha)$ of the assigning polynomial $F(G,\alpha;k)$ ($\alpha=\alpha_{G,b}$) is a positive integer.
\end{corollary}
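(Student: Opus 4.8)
The plan is to reduce the claim to the case of the classical flow polynomial and then read it off \autoref{NBB-Theorem}. First I would record two soft facts. By \autoref{NBB-Theorem} each signless coefficient $a_i(G,\alpha)$ is the cardinality of an explicit family of edge subsets, hence a nonnegative integer, so only strict positivity remains to be shown. Next, since $\alpha(X)\in\{0,1\}$ for every $X\in\Lambda(G)$, the assigning $\alpha_{G,0}$ induced by the zero function $0\in Z(G,A)$ satisfies $\alpha_{G,0}(X)=0\le\alpha(X)$ on all of $\Lambda(G)$; \autoref{Comparison} --- concretely its displayed special case $a_i(G,0)\le a_i(G,\alpha_{G,b})$ --- then reduces the problem to proving $a_i(G,0)\ge 1$ for $i=0,1,\ldots,m(G)$ whenever $G$ is bridgeless.

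Next I would peel off one more layer. By the $b=0$ instance of \autoref{NBB-Theorem} (the Broken Bond Theorem recorded just above), $a_i(G,0)$ is the number of $i$-edge subsets of $G$ that contain no broken bond with respect to $\prec$. This family is closed under passing to subsets: a broken bond inside $S'\subseteq S$ is a broken bond inside $S$, and $G-S'$ is still $0$-compatible by \autoref{Inclusion} (automatic for $b=0$). Hence it suffices to exhibit one broken-bond-free subset $S_\ast$ of size exactly $m(G)$; its $\binom{m(G)}{i}\ge 1$ subsets of size $i$ are then all counted by $a_i(G,0)$, which gives $a_i(G,0)\ge 1$ for every $0\le i\le m(G)$.

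For the construction I would take $T$ to be the spanning forest produced by the greedy (Kruskal) rule scanning $E(G)$ in increasing $\prec$-order, and put $S_\ast=E(G)\setminus T$; as $T$ is a spanning forest with $c(G)$ components, $|S_\ast|=|E(G)|-|V(G)|+c(G)=m(G)$. The crux --- and the step I expect to be the main obstacle --- is to check that $S_\ast$ contains no broken bond. The argument I have in mind: if some bond $C$ with maximal element $e_C$ satisfied $C\setminus\{e_C\}\subseteq S_\ast$, then $T\cap C\subseteq\{e_C\}$; since a spanning forest crosses every bond and $G$ is bridgeless, in fact $T\cap C=\{e_C\}$ with $|C|\ge 2$. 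Then, for each $e\in C\setminus\{e_C\}$, the fundamental cycle of $e$ relative to $T$ meets the cut $C$ in an even-sized subset of $\{e,e_C\}$ containing $e$, hence it also contains $e_C$, and the cycle property of a greedy spanning forest forces $e_C\prec e$; as this holds for all $e\in C\setminus\{e_C\}$, the edge $e_C$ would be the $\prec$-least element of $C$, contradicting $|C|\ge 2$. In matroid language this is simply the statement that the $\prec$-minimal cobasis of the cographic matroid $M^{\ast}(G)$ lies in its broken-circuit complex; a reader preferring to cite rather than compute may instead invoke the purity (indeed shellability) of the broken-circuit complex of the loopless matroid $M^{\ast}(G)$ --- loopless precisely because $G$ is bridgeless, and of rank $m(G)$ --- which yields broken-bond-free sets of all cardinalities $0,1,\ldots,m(G)$ at once.
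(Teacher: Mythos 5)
Your proposal is correct, and its top-level reduction is exactly the paper's: both deduce the claim from the displayed special case $a_i(G,0)\le a_i(G,\alpha_{G,b})$ of \autoref{Comparison}, so that everything rests on the positivity of the signless coefficients of the classical flow polynomial of a bridgeless graph. The difference is in how that base case is handled. The paper simply asserts that positivity of $a_i(G,0)$ for bridgeless $G$ is ``easily seen'' from \autoref{NBB-Theorem} and gives no argument, whereas you actually prove it: you observe that the family of broken-bond-free edge subsets is downward closed (with $0$-compatibility automatic), and you exhibit a broken-bond-free set of the maximal size $m(G)$, namely the complement of the Kruskal spanning forest taken in increasing $\prec$-order. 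Your verification of that last point --- cycle--cut orthogonality forces the fundamental cycle of each $e\in C\setminus\{e_C\}$ to contain $e_C$, the greedy cycle property then gives $e_C\prec e$, and bridgelessness gives $|C|\ge 2$, yielding the contradiction $e_C=\min C=\max C$ --- is sound, and it is the one genuinely nontrivial step in the whole corollary. So your write-up supplies a complete justification for a step the paper leaves implicit (equivalently, one could cite the purity of the broken-circuit complex of the loopless cographic matroid, as you note); it would be a worthwhile addition rather than a departure from the paper's route.
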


At the end of this section, we further provide the simple decomposition formulas for the numbers of nowhere-zero $A$-valued edge functions and $A$-valued edge functions in terms of the counting functions $F^*(G,b;A)$ and $F(G,b;A)$ as $b$ runs over members in $Z(G,A)$, respectively.
\begin{theorem}[Decomposition Formula] Let $G$ be a graph with $m$ edges. We have
\[
\sum_{b\in Z(G,A)}F^*(G,b;A)=\big(|A|-1\big)^m\quad\And\quad \sum_{b\in Z(G,A)}F(G,b;A)=|A|^m.
\]
\end{theorem}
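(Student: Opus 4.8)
The plan is to prove both identities by a single fibre‑counting (double‑counting) argument on the set $A^{E(G)}$ of all $A$‑valued edge functions, using the boundary map $\partial\colon A^{E(G)}\to A^{V(G)}$.

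First I would recall the fact already quoted in the introduction: by \cite[Proposition 2.1]{Jaeger1992}, for every $f\in A^{E(G)}$ the boundary $\partial f$ is a locally $A$‑valued zero‑sum function, i.e.\ $\partial f\in Z(G,A)$. Hence the boundary map restricts to a map $\partial\colon A^{E(G)}\to Z(G,A)$, and its fibres $\partial^{-1}(b)$, as $b$ ranges over $Z(G,A)$, form a partition of $A^{E(G)}$. By the very definition of an $(A,b)$‑flow, the fibre $\partial^{-1}(b)$ is precisely the set of $(A,b)$‑flows of $G$, which has cardinality $F(G,b;A)$. Since $\big|A^{E(G)}\big|=|A|^{m}$, summing the fibre sizes yields
\[
\sum_{b\in Z(G,A)}F(G,b;A)=\big|A^{E(G)}\big|=|A|^{m},
\]
which is the second identity. (Here one may also invoke the classical fact $F(G,b;A)=|A|^{m(G)}$ recorded in \eqref{Eq0}, but this is not needed for the count.)

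For the first identity I would repeat the argument with $A^{E(G)}$ replaced by its subset $(A\setminus\{0\})^{E(G)}$ of nowhere‑zero edge functions. The boundary of such a function is still locally zero‑sum, so restricting the boundary map to this subset again produces a partition indexed by $b\in Z(G,A)$, some of whose blocks may be empty. The block over $b$ consists exactly of the functions $f$ with $\partial f=b$ and $f(e)\ne 0$ for all $e\in E(G)$, i.e.\ the nowhere‑zero $(A,b)$‑flows of $G$, of which there are $F^*(G,b;A)$. Since $\big|(A\setminus\{0\})^{E(G)}\big|=(|A|-1)^{m}$, summing over the blocks gives
\[
\sum_{b\in Z(G,A)}F^*(G,b;A)=(|A|-1)^{m}.
\]

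There is essentially no obstacle here: the only point requiring care is the observation that the boundary of every (nowhere‑zero) edge function is already locally zero‑sum, so the index set of the partition is exactly $Z(G,A)$ and no terms are missing — and this is precisely \cite[Proposition 2.1]{Jaeger1992}. Alternatively, one could sum the Counting Formula of \autoref{Flow-Polnomial} over all $b\in Z(G,A)$ and rearrange, but the direct fibre‑counting argument above is shorter and avoids any inclusion–exclusion bookkeeping.
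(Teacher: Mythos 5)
Your proof is correct and is essentially the same argument as the paper's: the paper also counts the (nowhere-zero) $A$-valued edge functions in two ways, grouping them by their boundary $\partial f\in Z(G,A)$, which is exactly your fibre-counting partition. The only difference is presentational — you phrase it as fibres of the boundary map while the paper phrases it as counting pairs $(f,\partial f)$.
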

\begin{proof}
We will prove the former equation by counting the number of pairs $(b,f)$ such that $b\in Z(G,A)$ and $f$ is a nowhere-zero $(A,b)$-flow of $G$ in two different ways. It is clear that  $\sum_{b\in Z(G,A)}F^*(G,b;A)$ is the sum of the numbers of nowhere-zero $(A,b)$-flows as $b$ ranges over all members in $Z(G,A)$. On the other hand, if we consider a function $f$ from $A\setminus\{0\}$ to $E(G)$, then the boundary $\partial f$  is automatically a locally $A$-valued zero-sum function on $G$. In this case, $G$ is $\partial f$-compatible. Thus, the number of pairs $(f,\partial f)$ as $f$ runs over elements in $\big(A\setminus\{0\}\big)^{E(G)}$ equals $\big(|A|-1\big)^m$. Therefore, we obtain the equation $\sum_{b\in Z(G,A)}F^*(G,b;A)=\big(|A|-1\big)^m$.
Using a similar argument as the former equation, we can deduce the latter equation, which completes the proof.
\end{proof}
\section*{Acknowledgements}
The first author is supported by National Natural Science Foundation of China under Grant No. 12301424. The corresponding author is supported by Natural Science Foundation of Shanxi Province under Grant No. 202203021212484.


\begin{thebibliography}{99}\setlength{\itemsep}{-.0mm}
\bibitem{Birkhoff1912}
G. D. Birkhoff, A determinant formula for the number of ways of coloring a map, Ann. of Math. (2) 14 (1912), 42--46.

\bibitem{Bondy1976}
J. A. Bondy, U. S. R. Murty, Graph Theory with Applications, Macmillan, London, 1976.

\bibitem{Bondy2008}
J. A. Bondy, U. S. R. Murty, Graph Theory, Graduate Texts in Mathematics, vol. 244, Springer, 2008.

\bibitem{Dong2019}
F. M. Dong, A survey on the study of real zeros of flow polynomials, J. Graph Theory 92 (2019), 361--376.

\bibitem{Dong-Koh2007}
F. M. Dong, K. M. Koh, Bounds for the coefficients of flow polynomials, J. Combin. Theory Ser. B 97 (2007), 413--420.

\bibitem{Han-Li-Li-Wang2021}
M. Han, J. Li, X. Li, M. Wang, Group connectivity under 3-edge-connectivity, J. Graph Theory 96 (2021), 438--450.

\bibitem{Husek2020}
R. Hu${\rm\check{s}}$ek, L. Mohelníková, R. ${\rm\check{S}}$ámal, Group connectivity: $\mathbb{Z}_4$ vs $\mathbb{Z}_2^2$, J. Graph Theory 93 (2020), 317--327.

\bibitem{Jaeger1988}
F. Jaeger, Nowhere-zero flow problems, in: L. W. Beineke, R. J. Wilson (Eds.), Selected Topics in Graph Theory, Vol. 3, Academic Press, New York, 1988, pp. 71--95.

\bibitem{Jaeger1992}
F. Jaeger, N. Linial, C. Payan, M. Tarsi, Group connectivity of graphs--a nonhomongenous analogue of nowhere-zero flow properties,  J. Combin. Theory Ser. B 56 (1992), 165--182.

\bibitem{Kochol2002}
M. Kochol, Polynomials associated with nowhere-zero flows, J. Combin. Theory, Ser. B 84 (2002), 260--269.

\bibitem{Kochol2005}
M. Kochol, Decomposition formulas for the flow polynomial, European J. Combin.  26 (2005), 1086--1093.

\bibitem{Kochol2022}
M. Kochol, Polynomials counting nowhere-zero chains in graphs, Electron. J. Combin. 29 (2022) P1.19.

\bibitem{Kral2005}
D. Kr\'al', O. Pangr\'ac, H.-J. Voss, A note on group colorings, J. Graph Theory 50 (2005), 123--129.

\bibitem{Lai2000}
H.-J. Lai, Group connectivity of 3-edge-connected chordal graphs, Graphs Combin. 16 (2000), 165--176.

\bibitem{Lai-Mazza2021}
H.-J. Lai, L. Mazza, A note on group colorings and group structure, SIAM J. Discrete Math. 35 (2021), 2535--2543.

\bibitem{Lai2006}
H.-J. Lai, X. Yao, Group connectivity of graphs with diameter at most 2, European J. Combin. 27 (2006), 436--447.

\bibitem{Langhede-Thomassen2020}
R. Langhede, C. Thomassen, Group connectivity and group coloring: small groups versus large groups, Electron. J.
Combin. 27 (2020) P1.49.

\bibitem{Langhede-Thomassen2024}
R. Langhede, C. Thomassen, Group coloring and group connectivity with non-isomorphic groups of the same order, European J. Combin. 119 (2024), 103816.

\bibitem{Li2017}
J. Li, H.-J. Lai, R. Luo, Group connectivity, strongly $\mathbb{Z}_m$-connectivity, and edge disjoint spanning trees, SIAM J. Discrete Math. 31 (2017), 1909--1922.

\bibitem{Sekine1997}
K. Sekine, C. Q. Zhang, Decomposition of the flow polynomial, Graphs Combin. 13 (1997), 189--196.

\bibitem{Seymour1995}
P. D. Seymour, Nowhere-zero flows, Appendix to Chapter 4, in: R.L. Graham, M. Grötschel, L. Lov ász (Eds.), Handbook of Combinatorics, vol. 1, North-Holland (Elsevier), Amsterdam, 1995, pp. 289--299.

\bibitem{Tutte1949}
W. T. Tutte, On the imbedding of linear graphs in surfaces, Proc. London Math. Soc. (2) 51 (1949), 474--483.

\bibitem{Tutte1954}
W. T. Tutte, A contribution to the theory of chromatic polynomials, Canad. J. Math. 6 (1954), 80--91.

\bibitem{Tutte1966}
W. T. Tutte, On the algebraic theory of graph colorings, J. Combin. Theory 1 (1966), 15--50.

\bibitem{Whitney1932}
H. Whitney, A logical expansion in mathematics, Bull. Amer. Math. Soc. 38 (1932), 572--579.

\bibitem{Zhang1997}
C.-Q. Zhang, Integer Flows and Cycle Covers of Graphs, Marcel Dekker, Inc, 1997.
\end{thebibliography}
\end{document}